\documentclass[12pt]{article}
\oddsidemargin 0 mm
\topmargin -10 mm
\headheight 0 mm
\headsep 0 mm 
\textheight 246.2 mm
\textwidth 159.2 mm
\footskip 9 mm
\setlength{\parindent}{0pt}
\setlength{\parskip}{5pt plus 2pt minus 1pt}
\pagestyle{plain}
\usepackage{amssymb}
\usepackage{amsthm}
\usepackage{amsmath}
\usepackage{graphicx}
\usepackage{enumerate}

\DeclareMathOperator{\DM}{DM}
\DeclareMathOperator{\BDM}{\mathbf{DM}}

\newtheorem{theorem}{Theorem}[section]
\newtheorem{definition}[theorem]{Definition}
\newtheorem{lemma}[theorem]{Lemma}

\newtheorem{remark}[theorem]{Remark}
\newtheorem{example}[theorem]{Example}

\title{Kleene posets and pseudo-Kleene posets}
\author{Ivan~Chajda and Helmut~L\"anger$^1$}
\date{}
\begin{document}
\footnotetext[1]{Corresponding author, helmut.laenger@tuwien.ac.at}
\footnotetext[2]{Support of the research of the authors by the Austrian Science Fund (FWF), project I~4579-N, and the Czech Science Foundation (GA\v CR), project 20-09869L, entitled ``The many facets of orthomodularity'', as well as by \"OAD, project CZ~02/2019, entitled ``Function algebras and ordered structures related to logic and data fusion'', and, concerning the first author, by IGA, project P\v rF~2020~014, is gratefully acknowledged.}
\maketitle
\begin{abstract}
The concept of a Kleene algebra (sometimes also called Kleene lattice) was already generalized by the first author for non-distributive lattices under the name pseudo-Kleene algebra. We extend these concepts to posets and show how (pseudo-)Kleene posets can be characterized by identities and implications of assigned commutative meet-directoids. Moreover, we prove that the Dedekind-MacNeille completion of a pseudo-Kleene poset is a pseudo-Kleene algebra and that the Dedekind-MacNeille completion of a finite Kleene poset is a Kleene algebra. Further, we introduce the concept of a strict (pseudo-)Kleene poset and show that under an additional assumption a strict Kleene poset can be organized into a residuated structure. Finally, we prove by using the so-called twist construction that every poset can be embedded into a pseudo-Kleene poset in some natural way.
\end{abstract}

{\bf AMS Subject Classification:} 06A11, 06D30, 03G25

{\bf Keywords:} (pseudo-)Kleene algebra, (pseudo-)Kleene poset, strong pseudo-Kleene \\
poset, strict (pseudo-)Kleene poset, commutative meet-directoid, Dedekind-\\
-MacNeille completion, twist construction

\section{Introduction}

Kleene algebras, or in another terminology Kleene lattices (see \cite{Ci} and \cite K), are special cases of De Morgan algebras, i.e.\ distributive lattices with an antitone involution satisfying the so-called {\em normality condition}, i.e.\ the identity $x\wedge x'\leq y\vee y'$. These algebras were also called {\em normal i-lattices} (see \cite K) or quasi-Boolean algebras (by A.~Bialynicki and H.~Rasiova). They are important models in the field of logic since they generalize Boolean algebras, \L ukasiewicz algebras and Post algebras. The name ``Kleene algebra'' was introduced by R.~Cignoli (\cite{Ci}). The case when the underlying lattice need not be distributive was treated by the first author in \cite{Ch} under the name pseudo-Kleene algebras.

In some propsitional logics the identification of disjunction with lattice join $\vee$ turns out to be problematic. For example, the logic of quantum mechanics was originally modeled by orthomodular lattices (which are special pseudo-Kleene algebras) and later on by orthomodular posets in which the existence of the join $x\vee y$ of two elements $x$ and $y$ is guaranteed only in the case when these elements are orthogonal to each other which means that $x\leq y'$ (or, equivalently, $y\leq x'$). Similar problems may occur also in other models of non-classical logics. Hence the question arises whether results obtained for Kleene algebras or pseudo-Kleene algebras can be generalized to posets with an antitone involution satisfying some condition analogous to normality. In this paper we solve this problem by investigating so-called Kleene posets, pseudo-Kleene posets and strong pseudo-Kleene posets. We believe that these may be successfully applied in the algebraic axiomatization of several non-classical logics. Moreover, we introduce some kind of residuation which may be applied in fuzzy logic.

For the reader's convenience, we recall several concepts concerning posets.

Let $\mathbf P=(P,\leq)$ be a poset, $a,b\in P$ and $A,B\subseteq P$. We write $a\parallel b$ if $a$ and $b$ are incomparable and we extend $\leq$ to subsets by defining
\[
A\leq B\text{ if and only if }x\leq y\text{ for all }x\in A\text{ and }y\in B.
\]
Instead of $\{a\}\leq B$ and $A\leq\{b\}$ we also write $a\leq B$ and $A\leq b$, respectively. Analogous notations are used for the reverse order $\geq$. Moreover, we define
\begin{align*}
L(A) & :=\{x\in P\mid x\leq A\}, \\
U(A) & :=\{x\in P\mid A\leq x\}.
\end{align*}
Instead of $L(A\cup B)$, $L(\{a\}\cup B)$, $L(A\cup\{b\})$ and $L(\{a,b\})$ we also write $L(A,B)$, $L(a,B)$, $L(A,b)$ and $L(a,b)$, respectively. Analogous notations are used for $U$. Instead of $L(U(A))$ we also write $LU(A)$. Analogously, we proceed in similar cases. Sometimes we identify singletons with their unique element, so we often write $L(a,b)=0$ and $U(a,b)=1$ instead of $L(a,b)=\{0\}$ and $U(a,b)=\{1\}$, respectively. The {\em poset} $\mathbf P$ is called {\em downward directed} if $L(x,y)\neq\emptyset$ for all $x,y\in P$. The {\em poset} $\mathbf P$ is called {\em bounded} if it has a least element $0$ and a greatest element $1$. This fact will be expressed by notation $(P,\leq,0,1)$. The {\em poset} $\mathbf P$ is called {\em distributive} if it satisfies one of the following equivalent identities:
\begin{align*}
 L(U(x,y),z) & \approx LU(L(x,z),L(y,z)), \\
UL(U(x,y),z) & \approx U(L(x,z),L(y,z)), \\
 U(L(x,y),z) & \approx UL(U(x,z),U(y,z)), \\
LU(L(x,y),z) & \approx L(U(x,z),U(y,z)).
\end{align*}
In fact, the inclusions
\begin{align*}
LU(L(x,z),L(y,z)) & \subseteq L(U(x,y),z), \\
UL(U(x,z),U(y,z)) & \subseteq U(L(x,y),z)
\end{align*}
hold in every poset. Hence, to check distributivity, we need only to confirm one of the converse inclusions. A unary operation $'$ on $P$ is called
\begin{itemize}
\item {\em antitone} if, for all $x,y\in P$, $x\leq y$ implies $y'\leq x'$,
\item an {\em involution} if it satisfies the identity $x''\approx x$.
\end{itemize}
For $A\subseteq P$ we define $A':=\{x'\mid x\in A\}$. If the poset is bounded and distributive, we can prove the following property of an antitone involution.

\begin{lemma}
Let $(P,\leq,{}',0,1)$ be a bounded distributive poset with an antitone involution and $a,b\in P$ with $a\leq b$ and $L(b,a')=\{0\}$. Then the following hold:
\begin{align*}
L(a,a') & =L(b,b')=\{0\}, \\
U(a,a') & =U(b,b')=\{1\}.
\end{align*}
\end{lemma}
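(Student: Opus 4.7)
The plan is to prove the two $L$-equalities directly from monotonicity/antitonicity, and then obtain the two $U$-equalities by dualizing via the antitone involution. Distributivity is not actually needed for this statement.

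First I would show $L(a,a')=\{0\}$. Since $a\le b$, any lower bound of $a$ is a lower bound of $b$, so $L(a,a')\subseteq L(b,a')=\{0\}$; the reverse inclusion is immediate from $0\in L(a,a')$. Next, from $a\le b$ and the antitonicity of ${}'$, I get $b'\le a'$, whence any lower bound of $b'$ is a lower bound of $a'$; therefore $L(b,b')\subseteq L(b,a')=\{0\}$, and again $0\in L(b,b')$ gives equality.

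For the $U$-equalities, the key observation is the set-level duality induced by the antitone involution: for any $A\subseteq P$,
\[
U(A)=\bigl(L(A')\bigr)'.
\]
Indeed, $x\in U(A)$ iff $A\le x$ iff (applying $'$ and using $x''=x$) $x'\le A'$ iff $x'\in L(A')$ iff $x\in(L(A'))'$. I also need the small fact that $0'=1$ and $1'=0$; this follows since $0\le 1'$ (as $0$ is least) and applying $'$ gives $1\le 0'$, so $0'=1$, and similarly $1'=0$. Applying the duality with $A=\{a,a'\}$ and $A=\{b,b'\}$ yields
\[
U(a,a')=\bigl(L(a',a'')\bigr)'=\bigl(L(a,a')\bigr)'=\{0\}'=\{1\},
\]
and analogously $U(b,b')=(L(b,b'))'=\{1\}$.

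There is no real obstacle here; the only point requiring a moment's care is recording the pair of facts $0'=1$ and $1'=0$ from the antitone involution before invoking the duality $U(A)=(L(A'))'$. Everything else is a one-line use of $a\le b\Rightarrow L(a)\subseteq L(b)$ and its antitone counterpart $b'\le a'\Rightarrow L(b')\subseteq L(a')$.
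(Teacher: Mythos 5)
Your proof is correct, and for the two $L$-equalities it takes a genuinely different and more elementary route than the paper. The paper derives $L(a,a')=\{0\}$ and $L(b,b')=\{0\}$ by invoking $LUL(A)=L(A)$ together with two applications of the distributive law $LU(L(x,z),L(y,z))=L(U(x,y),z)$ (absorbing the hypothesis $L(b,a')=\{0\}$ as the set $\{0\}$ inside an $LU(\cdot,\cdot)$ expression), whereas you simply observe the inclusions $L(a,a')\subseteq L(b,a')$ (from $a\leq b$) and $L(b,b')\subseteq L(b,a')$ (from $b'\leq a'$), both of which follow from transitivity and antitonicity alone. Your observation that distributivity is not needed is accurate: the lemma as stated holds in any bounded poset with an antitone involution, so your argument is strictly more general, and also shorter. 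For the $U$-equalities both proofs use the same duality $U(A)=(L(A'))'$ and the fact $\{0\}'=\{1\}$; you are slightly more careful than the paper in explicitly recording that $0'=1$ follows from antitonicity and boundedness. The only thing the paper's heavier computation ``buys'' is that it displays the kind of $LU$-calculus with the distributive identities that recurs throughout the rest of the paper (e.g.\ in Theorem~\ref{th4}), but as a proof of this particular lemma yours is preferable.
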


\begin{proof}
We have
\begin{align*}
L(a,a') & =LUL(a,a')=LU(L(a,a'),0)=LU(L(a,a'),L(b,a'))=L(U(a,b),a')= \\
        & =L(U(b),a')=L(b,a')=\{0\}, \\
L(b,b') & =LUL(b',b)=LU(0,L(b',b))=LU(L(a',b),L(b',b))=L(U(a',b'),b)= \\
        & =L(U(a'),b)=L(a',b)=\{0\}, \\
U(a,a') & =(L(a',a))'=\{0\}'=\{1\}, \\
U(b,b') & =(L(b',b))'=\{0\}'=\{1\}.
\end{align*}
\end{proof}

\section{Kleene posets and pseudo-Kleene posets}

Now we define our main concepts.

\begin{definition}
A {\em pseudo-Kleene poset} is a poset $\mathbf P=(P,\leq,{}')$ with an antitone involution satisfying
\begin{enumerate}
\item[{\rm(K)}] $L(x,x')\leq U(y,y')$ for all $x,y\in P$.
\end{enumerate}
An element $a$ of $P$ is called a {\em fixed point} of $\mathbf P$ if $a'=a$. By a {\em Kleene poset} we mean a distributive pseudo-Kleene poset.
\end{definition}

\begin{lemma}
Let $\mathbf P=(P,\leq,{}')$ be a pseudo-Kleene poset. Then $\mathbf P$ has at most one fixed point.
\end{lemma}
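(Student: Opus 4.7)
The plan is to exploit condition (K) twice, once with the roles of the two putative fixed points swapped, in order to deduce mutual comparability, hence equality.

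Suppose $a$ and $b$ are both fixed points of $\mathbf P$, i.e.\ $a'=a$ and $b'=b$. I would first observe that with these equalities, the sets $L(a,a')$ and $U(a,a')$ collapse to $L(a)$ and $U(a)$ respectively (and similarly for $b$), since $L(a,a)=L(a)$ and $U(a,a)=U(a)$. In particular $a\in L(a)$ and $b\in U(b)$, and symmetrically $b\in L(b)$ and $a\in U(a)$.

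Next I would apply condition (K) with $x:=a$ and $y:=b$ to obtain $L(a)=L(a,a')\leq U(b,b')=U(b)$. Since $a\in L(a)$ and $b\in U(b)$, this yields $a\leq b$. Applying (K) again with $x:=b$ and $y:=a$ gives symmetrically $b\leq a$. Antisymmetry of $\leq$ then forces $a=b$.

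There is no real obstacle here: the argument is a direct two-line application of the normality-type condition (K) together with the defining property of a fixed point, and does not require distributivity, boundedness, or the preceding lemma. The only thing to be a little careful about is making explicit that $L(a,a')$ reduces to $L(a)$ (not to some one-element set) under $a'=a$, so that the extracted inequality $a\leq b$ really follows from the set-inequality $L(a)\leq U(b)$.
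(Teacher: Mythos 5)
Your proof is correct and is essentially identical to the paper's own argument: apply (K) to the two fixed points in both orders, note that $L(a,a')=L(a)$ and $U(b,b')=U(b)$, extract $a\leq b$ and $b\leq a$, and conclude by antisymmetry. Nothing further is needed.
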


\begin{proof}
If $a$ and $b$ are fixed points of $\mathbf P$ then
\begin{align*}
L(a) & =L(a,a)=L(a,a')\leq U(b,b')=U(b,b)=U(b), \\
L(b) & =L(b,b)=L(b,b')\leq U(a,a')=U(a,a)=U(a)
\end{align*}
and hence $a\leq b$ and $b\leq a$, i.e.\ $a=b$.
\end{proof}

\begin{example}\label{ex2}
The poset visualized in Figure~1 is a Kleene poset which is not a lattice:
\vspace*{-2mm}
\begin{center}
\setlength{\unitlength}{7mm}
\begin{picture}(4,8)
\put(2,1){\circle*{.3}}
\put(1,3){\circle*{.3}}
\put(3,3){\circle*{.3}}
\put(1,5){\circle*{.3}}
\put(3,5){\circle*{.3}}
\put(2,7){\circle*{.3}}
\put(1,3){\line(0,1)2}
\put(1,3){\line(1,1)2}
\put(1,3){\line(1,-2)1}
\put(3,3){\line(-1,-2)1}
\put(3,3){\line(-1,1)2}
\put(3,3){\line(0,1)2}
\put(2,7){\line(-1,-2)1}
\put(2,7){\line(1,-2)1}
\put(1.85,.25){$0$}
\put(.3,2.85){$a$}
\put(3.4,2.85){$b$}
\put(.3,4.85){$b'$}
\put(3.4,4.85){$a'$}
\put(1.3,7.4){$1=0'$}
\put(1.2,-.75){{\rm Fig.\ 1}}
\end{picture}
\end{center}
\vspace*{4mm}
\end{example}

\begin{example}
The poset $(P,\leq,{}')$ visualized in Figure~2 is a pseudo-Kleene poset which is neither a lattice nor a Kleene poset since
\[
L(U(a,c),b)=L(1,b)=L(b)\neq\{0\}=L(P)=LU(0)=LU(0,0)=LU(L(a,b),L(c,b)).
\]
\vspace*{-2mm}
\begin{center}
\setlength{\unitlength}{7mm}
\begin{picture}(8,8)
\put(4,1){\circle*{.3}}
\put(3,3){\circle*{.3}}
\put(5,3){\circle*{.3}}
\put(3,5){\circle*{.3}}
\put(5,5){\circle*{.3}}
\put(4,7){\circle*{.3}}
\put(1,4){\circle*{.3}}
\put(7,4){\circle*{.3}}
\put(3,3){\line(0,1)2}
\put(3,3){\line(1,1)2}
\put(3,3){\line(1,-2)1}
\put(5,3){\line(-1,-2)1}
\put(5,3){\line(-1,1)2}
\put(5,3){\line(0,1)2}
\put(4,7){\line(-1,-2)1}
\put(4,7){\line(1,-2)1}
\put(4,1){\line(-1,1)3}
\put(4,1){\line(1,1)3}
\put(4,7){\line(-1,-1)3}
\put(4,7){\line(1,-1)3}
\put(3.85,.25){$0$}
\put(2.3,2.85){$a$}
\put(5.4,2.85){$b$}
\put(.3,3.85){$c$}
\put(7.4,3.85){$c'$}
\put(2.3,4.85){$b'$}
\put(5.4,4.85){$a'$}
\put(3.3,7.4){$1=0'$}
\put(3.2,-.75){{\rm Fig.\ 2}}
\end{picture}
\end{center}
\vspace*{4mm}
\end{example}

Recall that a {\em pseudo-Kleene algebra} (see \cite{Ch}) is a lattice $(L,\vee,\wedge,{}')$ with an antitone involution satisfying the identity
\[
x\wedge x'\leq y\vee y'\text{ for all }x,y\in L.
\]
A {\em Kleene algebra} (see \cite K) is a distributive pseudo-Kleene algebra. Observe that a lattice with an antitone involution is a (pseudo-)Kleene poset if and only if it is a (pseudo-)Kleene algebra. Hence, our concepts defined above are appropriate generalizations of pseudo-Kleene algebras and Kleene algebras as will be shown in the next section.

\begin{example}
The lattice visualized in Figure~3 is a pseudo-Kleene algebra which is not a Kleene algebra since the lattice is not distributive:
\vspace*{-2mm}
\begin{center}
\setlength{\unitlength}{7mm}
\begin{picture}(4,10)
\put(2,1){\circle*{.3}}
\put(1,3){\circle*{.3}}
\put(3,3){\circle*{.3}}
\put(1,5){\circle*{.3}}
\put(3,5){\circle*{.3}}
\put(1,7){\circle*{.3}}
\put(3,7){\circle*{.3}}
\put(2,9){\circle*{.3}}
\put(1,3){\line(0,1)4}
\put(1,3){\line(1,2)2}
\put(1,3){\line(1,-2)1}
\put(3,3){\line(-1,-2)1}
\put(3,3){\line(0,1)4}
\put(2,9){\line(-1,-2)1}
\put(2,9){\line(1,-2)1}
\put(1.85,.25){$0$}
\put(.3,2.85){$a$}
\put(3.4,2.85){$b$}
\put(.3,4.85){$c$}
\put(3.4,4.85){$c'$}
\put(.3,6.85){$b'$}
\put(3.4,6.85){$a'$}
\put(1.3,9.4){$1=0'$}
\put(1.2,-.75){{\rm Fig.\ 3}}
\end{picture}
\end{center}
\vspace*{4mm}
\end{example}

\begin{example}
The lattice visualized in Figure~4 is a Kleene algebra:
\vspace*{-2mm}
\begin{center}
\setlength{\unitlength}{7mm}
\begin{picture}(4,10)
\put(2,1){\circle*{.3}}
\put(2,3){\circle*{.3}}
\put(1,5){\circle*{.3}}
\put(3,5){\circle*{.3}}
\put(2,7){\circle*{.3}}
\put(2,9){\circle*{.3}}
\put(2,3){\line(0,-1)2}
\put(2,3){\line(-1,2)1}
\put(2,3){\line(1,2)1}
\put(2,7){\line(-1,-2)1}
\put(2,7){\line(1,-2)1}
\put(2,7){\line(0,1)2}
\put(1.85,.25){$0$}
\put(2.4,2.85){$a$}
\put(.3,4.85){$b$}
\put(3.4,4.85){$b'$}
\put(2.4,6.85){$a'$}
\put(1.3,9.4){$1=0'$}
\put(1.2,-.75){{\rm Fig.\ 4}}
\end{picture}
\end{center}
\vspace*{4mm}
\end{example}

\section{Dedekind-MacNeille completion}

In what follows we investigate the question for which posets $\mathbf P$ with an antitone involution their Dedekind-MacNeille completion $\BDM(\mathbf P)$ is either a pseudo-Kleene algebra or a Kleene algebra.

Let $\mathbf P=(P,\leq,{}')$ be a poset with an antitone involution. Define
\begin{align*}
 \DM(\mathbf P) & :=\{L(A)\mid A\subseteq P\}, \\
            A^* & :=L(A')\text{ for all }A\in\DM(\mathbf P), \\
\BDM(\mathbf P) & :=(\DM(\mathbf P),\subseteq,^*)
\end{align*}
Then $\BDM(\mathbf P)$ is a complete lattice with an antitone involution, called the {\em Dedekind-MacNeille completion} of $\mathbf P$. That $^*$ is an antitone involution on $(\DM(\mathbf P),\subseteq)$ can be seen as follows. Let $A,B\in\DM(\mathbf P)$. If $A\subseteq B$ then $A'\subseteq B'$ and hence $B^*=L(B')\subseteq L(A')=A^*$. Moreover, $A^{**}=L((L(A'))')=LU(A)=A$. We have
\begin{align*}
 (L(A))^* & =L((L(A))')=LU(A')\text{ for all }A\subseteq P, \\
  A\vee B & =LU(A,B)\text{ for all }A,B\in\DM(\mathbf P), \\
A\wedge B & =A\cap B\text{ for all }A,B\in\DM(\mathbf P).
\end{align*}

\begin{theorem}\label{th6}
Let $\mathbf P=(P,\leq,{}')$ be a poset with an antitone involution. Then $\BDM(\mathbf P)$ is a pseudo-Kleene algebra if and only if $\mathbf P$ is a pseudo-Kleene poset.
\end{theorem}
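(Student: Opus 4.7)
The plan is to unwind the normality condition inside $\BDM(\mathbf P)$ using the formulas already recorded before the theorem: meet is intersection, join is $LU$ of the union, and $A^*=L(A')=LU(A')$. Both directions will pivot on two elementary dualities: an element $x$ of $P$ satisfies $x\leq x'$ iff $x\in L(x,x')$, and a lower set $A\in\DM(\mathbf P)$ ``realises its own normality'' at each of its points thanks to $A=LU(A)$. For the $(\Rightarrow)$ direction I will test normality on principal down-sets; for the $(\Leftarrow)$ direction I will chase an arbitrary element of $A\wedge A^*$ through the definitions.

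For $(\Leftarrow)$, assume $\mathbf P$ satisfies (K) and let $A,B\in\DM(\mathbf P)$. I must show $A\cap L(A')\subseteq LU(B,L(B'))$. Pick $x\in A\cap L(A')$. Since $x\in A$ and $x\leq a'$ for every $a\in A$, specialising $a=x$ yields $x\leq x'$, i.e.\ $x\in L(x,x')$. Dually, for any $z\in U(B,L(B'))$ the inequality $z\geq b$ for all $b\in B$ gives $z'\leq b'$ for all $b\in B$, so $z'\in L(B')$; combined with $z\geq L(B')$ this forces $z'\leq z$, hence $z\in U(z,z')$. Now (K) applied to the pair $(x,z)$ yields $x\leq z$, which is exactly the defining property of $LU(B,L(B'))$. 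Thus $A\wedge A^*\subseteq B\vee B^*$, which is the normality of $\BDM(\mathbf P)$.

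For $(\Rightarrow)$, assume $\BDM(\mathbf P)$ is a pseudo-Kleene algebra and fix $x,y\in P$. I instantiate normality at the principal down-sets $L(x),L(y)\in\DM(\mathbf P)$. A short computation (using $LU(\{x'\})=L(x')$ because $x'=\Min U(x')$) shows $(L(x))^*=L(x')$, hence $L(x)\wedge(L(x))^*=L(x,x')$. Similarly, $U(L(y),L(y'))=U(y,y')$ because $y,y'$ are respectively the maxima of $L(y)$ and $L(y')$, so $L(y)\vee(L(y))^*=LU(y,y')$. Normality in $\BDM(\mathbf P)$ then reads $L(x,x')\subseteq LU(y,y')$; but every element of $LU(y,y')$ is a lower bound of $U(y,y')$, so for any $u\in L(x,x')$ and any $v\in U(y,y')$ we get $u\leq v$, i.e.\ $L(x,x')\leq U(y,y')$, establishing (K).

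The only real obstacle is bookkeeping: getting the identifications $(L(x))^*=L(x')$, $U(L(y),L(y'))=U(y,y')$, and the passage from $LU(y,y')$ to ``lower bound of $U(y,y')$'' right. Once these routine down-set identities are in place, the argument is essentially symmetric, with (K) and the normality of $\BDM(\mathbf P)$ each translating into the other through the principal down-set embedding $x\mapsto L(x)$.
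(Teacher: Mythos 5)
Your proof is correct and follows essentially the same route as the paper: unwind $\wedge$, $\vee$ and $^*$ in $\BDM(\mathbf P)$ and reduce normality there to condition (K) via the observations that every $u\in A\cap A^*$ satisfies $u\leq u'$ and every $z\in U(B\cup B^*)$ satisfies $z'\leq z$. The paper packages the same idea as inclusions $L(A,U(A'))\subseteq\bigcup_{x\in L(A)}L(x,x')$ and $U(L(B),B')\subseteq\bigcup_{y\in U(B')}U(y,y')$, whereas your element-wise chase (and your explicit verification of the ``evident'' converse via principal down-sets) is just a streamlined rendering of the same computation.
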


\begin{proof}
Assume that $\mathbf P$ is a pseudo-Kleene poset. Then the following are equivalent:
\begin{align*}
                   & \BDM(\mathbf P)\text{ is a pseudo-Kleene algebra}, \\
       C\wedge C^* & \subseteq D\vee D^*\text{ for all }C,D\in\DM(\mathbf P), \\
L(A)\wedge(L(A))^* & \subseteq L(B)\vee(L(B))^*\text{ for all }A,B\subseteq P, \\
 L(A)\cap L(U(A')) & \subseteq LU(L(B),LU(B'))\text{ for all }A,B\subseteq P, \\
        L(A,U(A')) & \subseteq L(UL(B)\cap ULU(B'))\text{ for all }A,B\subseteq P, \\
        L(A,U(A')) & \subseteq L(UL(B)\cap U(B'))\text{ for all }A,B\subseteq P, \\
        L(A,U(A')) & \subseteq LU(L(B),B')\text{ for all }A,B\subseteq P, \\
        L(A,U(A')) & \leq U(L(B),B')\text{ for all }A,B\subseteq P.
\end{align*}
Now let $A,B$ be fixed subsets of $P$. Then
\begin{align*}
L(A,U(A')) & =L(A)\cap LU(A')=\bigcup_{x\in L(A)}L(x)\cap\bigcap_{y\in U(A')}L(y)= \\
& =\bigcup_{x\in L(A)}(L(x)\cap\bigcap_{y\in U(A')}L(y))\subseteq\bigcup_{x\in L(A)}(L(x)\cap L(x'))=\bigcup_{x\in L(A)}L(x,x'), \\
U(L(B),B') & =UL(B)\cap U(B')=\bigcap_{x\in L(B)}U(x)\cap\bigcup_{y\in U(B')}U(y)= \\
& =\bigcup_{y\in U(B')}(\bigcap_{x\in L(B)}U(x)\cap U(y))\subseteq\bigcup_{y\in U(B')}(U(y')\cap U(y))=\bigcup_{y\in U(B')}U(y,y')
\end{align*}
and
\[
\bigcup_{x\in L(A)}L(x,x')\leq\bigcup_{y\in U(B')}U(y,y').
\]
Hence $\BDM(\mathbf P)$ is a pseudo-Kleene algebra provided $\mathbf P$ is a pseudo-Kleene poset. The converse is evident.
\end{proof}

\begin{theorem}
If $\mathbf P=(P,\leq,{}')$ is a finite poset with an antitone involution then $\BDM(\mathbf P)$ is a Kleene algebra if and only if $\mathbf P$ is a Kleene poset.
\end{theorem}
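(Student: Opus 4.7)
The plan is to combine Theorem \ref{th6} with a direct comparison of distributivity on the two levels. A Kleene poset is, by definition, a distributive pseudo-Kleene poset, and a Kleene algebra is a distributive pseudo-Kleene algebra; since Theorem \ref{th6} already matches the two notions of pseudo-Kleene structure without any finiteness assumption, the claim reduces to showing that for a finite poset $\mathbf P$ with antitone involution, $\BDM(\mathbf P)$ is a distributive lattice if and only if $\mathbf P$ is a distributive poset.

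For the direction ``$\BDM(\mathbf P)$ distributive implies $\mathbf P$ distributive'' I would use the order embedding $\iota : P \to \DM(\mathbf P)$, $\iota(x) := L(x)$. Since $U(L(x)) = U(x)$, one verifies $\iota(x) \wedge \iota(y) = L(x) \cap L(y) = L(x, y)$ and $\iota(x) \vee \iota(y) = LU(L(x) \cup L(y)) = L(U(x) \cap U(y)) = L(U(x, y))$. Substituting into the distributive identity in the lattice $\BDM(\mathbf P)$ and evaluating both sides yields
\[
L(U(x, y), z) = LU(L(x, z), L(y, z)),
\]
which is precisely one of the equivalent poset-distributivity identities recorded in the introduction. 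This direction uses neither the involution nor finiteness.

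The reverse implication ``$\mathbf P$ distributive implies $\BDM(\mathbf P)$ distributive'' is where finiteness is essential, and I expect this to be the main obstacle. My plan is to view $\BDM(\mathbf P)$ as the image of the closure operator $\gamma(D) := LU(D)$ on the always-distributive lattice of downsets of $\mathbf P$; the image of $\gamma$ consists precisely of the elements of $\DM(\mathbf P)$, with meet given by intersection and join given by $\gamma$ applied to the union. Standard lattice theory then yields distributivity of $\BDM(\mathbf P)$ provided $\gamma$ is a \emph{nucleus}, i.e.\ provided $LU(D \cap E) = LU(D) \cap LU(E)$ for all downsets $D, E$ of $\mathbf P$. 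The inclusion ``$\subseteq$'' is automatic from monotonicity, so the delicate step is $LU(D) \cap LU(E) \subseteq LU(D \cap E)$. The plan for this step is to exploit the finiteness of $\mathbf P$ by writing $D = \bigcup_{x \in \max D} L(x)$ (and similarly for $E$) for the finite antichain $\max D$ of maximal elements, and then to apply the two-variable poset-distributivity identity of $\mathbf P$ element-wise to pairs $(x, y) \in \max D \times \max E$, combining the resulting identities by intersection. Orchestrating this reduction cleanly, using only the poset-distributivity of $\mathbf P$, is the chief technical difficulty I foresee.
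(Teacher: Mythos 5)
Your reduction is the same as the paper's: Theorem~\ref{th6} handles the pseudo-Kleene part, the converse direction is the easy one, and everything hinges on showing that the Dedekind-MacNeille completion of a finite distributive poset is a distributive lattice. At that point the paper does not prove anything --- it cites this as a known theorem of M.~Ern\'e \cite{E} and stops. You instead attempt to prove Ern\'e's theorem from scratch, and that attempt has a genuine gap at exactly the step you flag as ``the chief technical difficulty.''

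Concretely: your nucleus strategy requires $LU(D)\cap LU(E)\subseteq LU(D\cap E)$ for downsets $D=\bigcup_i L(x_i)$, $E=\bigcup_j L(y_j)$, and your plan is to ``apply the two-variable distributivity identity element-wise to pairs $(x_i,y_j)$ and combine by intersection.'' This combination does not go through. Already for $m=n=2$ the required inclusion is
\[
LU(x_1,x_2)\cap LU(y_1,y_2)\subseteq LU\bigl((L(x_1)\cup L(x_2))\cap LU(y_1,y_2)\bigr),
\]
which is the binary distributive law with the single element $z$ replaced by the \emph{set} $U(y_1,y_2)$. If you instead apply the element-wise law once for each $z\in U(y_1,y_2)$ and intersect, you obtain
\[
\bigcap_{z\in U(y_1,y_2)}LU\bigl((L(x_1)\cup L(x_2))\cap L(z)\bigr),
\]
and the only free containment here is $\bigcap_z LU(S_z)\supseteq LU\bigl(\bigcap_z S_z\bigr)$ --- the wrong direction. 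So the element-wise identity does not formally iterate to the set-level statement; the operator $U(x,y)$ returns a subset rather than an element, and that is precisely why poset distributivity does not trivially lift to the completion. Closing this gap (e.g.\ via the forbidden-sublattice characterization of distributivity for the finite lattice $\BDM(\mathbf P)$, pulling an $M_3$ or $N_5$ back to a violation of the identity in $\mathbf P$) is the actual content of Ern\'e's theorem, and finiteness is used there in an essential way. Either supply such an argument or do what the paper does and cite \cite{E}; as written, the central implication is asserted but not proved.
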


\begin{proof}
According to a result by M.~Ern\'e (\cite E), the Dedekind-MacNeille completion of a finite distributive poset is a distributive lattice. Hence, by Theorem~\ref{th6} the Dedekind-MacNeille completion of a finite Kleene poset is a Kleene algebra. The converse is evident.
\end{proof}

The Dedekind-MacNeille completion of the Kleene poset from Example~\ref{ex2} is visualized in Figure~5:
\vspace*{-2mm}
\begin{center}
\setlength{\unitlength}{7mm}
\begin{picture}(4,10)
\put(2,1){\circle*{.3}}
\put(1,3){\circle*{.3}}
\put(3,3){\circle*{.3}}
\put(2,5){\circle*{.3}}
\put(1,7){\circle*{.3}}
\put(3,7){\circle*{.3}}
\put(2,9){\circle*{.3}}
\put(1,3){\line(1,2)2}
\put(1,3){\line(1,-2)1}
\put(3,3){\line(-1,2)2}
\put(3,3){\line(-1,-2)1}
\put(2,9){\line(-1,-2)1}
\put(2,9){\line(1,-2)1}
\put(1.85,.25){$0$}
\put(.3,2.85){$a$}
\put(3.4,2.85){$b$}
\put(2.4,4.85){$c=c'$}
\put(.3,6.85){$b'$}
\put(3.4,6.85){$a'$}
\put(1.3,9.4){$1=0'$}
\put(1.2,-.75){{\rm Fig.\ 5}}
\end{picture}
\end{center}
\vspace*{4mm}

\section{A representation by commutative meet-directoids}

Now we recall the concept of a commutative meet-directoid. We will use it for the characterization of pseudo-Kleene posets, Kleene posets, strong pseudo-Kleene posets and strict pseudo-Kleene posets. The advantage of this approach is that we characterize properties of posets by means of identities and quasiidentities of algebras. Hence, one can use algebraic tools for their investigation.

A {\em commutative meet-directoid} (see \cite{CL11} and \cite{JQ}) is a groupoid $\mathbf D=(D,\sqcap)$ satisfying the following identities:
\begin{align*}
                   x\sqcap x & \approx x\text{ (idempotency)}, \\
                   x\sqcap y & \approx y\sqcap x\text{ (commutativity)}, \\
(x\sqcap(y\sqcap z))\sqcap z & \approx x\sqcap(y\sqcap z)\text{ (weak associativity)}.
\end{align*}
If $\mathbf P=(P,\leq)$ is a downward directed poset, if we define $x\sqcap y:=x\wedge y$ for comparable $x,y\in P$ and if we put for $x\sqcap y=y\sqcap x$ an arbitrary element of $L(x,y)$ if $x,y\in P$ are incomparable, then $\mathbb D(\mathbf P):=(P,\sqcap)$ is a commutative meet-directoid which is called a {\em meet-directoid assigned} to $\mathbf P$. Conversely, if $\mathbf D=(D,\sqcap)$ is a commutative meet-directoid and we define
\[
x\leq y\text{ if and only if }x\sqcap y=x
\]
for all $x,y\in D$ then $\mathbb P(\mathbf D):=(D,\leq)$ is a downward directed poset, the so-called {\em poset induced} by $\mathbf D$. Though the assignment $\mathbf P\mapsto\mathbb D(\mathbf P)$ is not unique, we have $\mathbb P(\mathbb D(\mathbf P))=\mathbf P$ for every downward directed poset $\mathbf P$. Sometimes we consider posets and commutative meet-directoids together with a unary operation. Let $(D,\sqcap,{}')$ be a commutative meet-directoid $(D,\sqcap,{}')$ with an antitone involution. W e define
\[
x\sqcup y:=(x'\sqcap y')'\text{ for all }x,y\in D.
\]
Then $\sqcup$ is also idempotent, commutative and weakly associative and we have for all $x,y\in D$
\begin{align*}
x\sqcup y & =x\vee y\text{ if }x,y\text{ are }comparable, \\
x\sqcup y & =y\sqcup x\in U(x,y)\text{ if }x\parallel y, \\
x\sqcap y & =x\text{ if and only if }x\sqcup y=y, \\
     L(x) & =\{z\sqcap x\mid z\in P\}, \\
     U(x) & =\{z\sqcup x\mid z\in P\}, \\
   L(x,y) & =\{(z\sqcap x)\sqcap(z\sqcap y)\mid z\in P\}, \\
   U(x,y) & =\{(z\sqcup x)\sqcap(z\sqcup y)\mid z\in P\}.
\end{align*}

Posets with an antitone involution can be characterized in the language of commutative meet-directoids by identities as follows.

\begin{lemma}\label{lem1}
Let $\mathbf P=(P,\leq,{}')$ be a downward directed poset with a unary operation and $\mathbb D(\mathbf P)$ an assigned meet-directoid. Then $\mathbf P$ is a poset with an antitone involution if and only if $\mathbb D(\mathbf P)$ satisfies the identities
\begin{enumerate}[{\rm(1)}]
\item $x''\approx x$,
\item $(x\sqcap y)'\sqcap y'\approx y'$.
\end{enumerate}
\end{lemma}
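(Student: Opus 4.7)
The claim is that identities (1) and (2) are exactly the equational encoding of ``$'$ is an antitone involution''. My plan is to verify each direction by a short direct translation, using only the basic conventions recalled in the preceding paragraph: namely, that $x\leq y$ in $\mathbb P(\mathbb D(\mathbf P))=\mathbf P$ is equivalent to $x\sqcap y=x$, and that $x\sqcap y$ equals the ordinary meet whenever $x$ and $y$ are comparable.

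For the forward implication I would assume that $'$ is an antitone involution and derive (1) and (2). Identity (1) is literally the involution axiom $x''=x$, so nothing needs to be done. For (2), note that the element $x\sqcap y$ was defined to lie in $L(x,y)$, hence $x\sqcap y\leq y$. Applying antitonicity gives $y'\leq(x\sqcap y)'$, so $y'$ and $(x\sqcap y)'$ are comparable; by the definition of $\sqcap$ on comparable elements, $(x\sqcap y)'\sqcap y'=y'$, which is exactly (2).

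For the converse, assume (1) and (2). Then (1) immediately yields that $'$ is an involution. For antitonicity, suppose $x\leq y$ in $\mathbf P$. By the induced-order convention this means $x\sqcap y=x$, so $(x\sqcap y)'=x'$. Substituting into (2) produces $x'\sqcap y'=y'$, which, read again through the induced-order convention, is exactly $y'\leq x'$. Thus $'$ is antitone.

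The whole argument is essentially a dictionary translation, so there is no genuine obstacle; the only point requiring care is consistent use of the two conventions ($x\leq y\iff x\sqcap y=x$, and $\sqcap$ agreeing with $\wedge$ on comparable pairs), and in particular remembering that (2) is not asserting anything about incomparable $x,y$ beyond what follows from $x\sqcap y\leq y$, which holds by construction of the assigned directoid.
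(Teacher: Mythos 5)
Your proof is correct and follows essentially the same route as the paper's: identity (1) is the involution axiom verbatim, and identity (2) is translated into antitonicity using exactly the two conventions you cite ($x\sqcap y\leq y$ always, and $a\leq b\iff a\sqcap b=a$), the only difference being that you spell out the two implications separately where the paper compresses them into a single chain of equivalences. No gaps.
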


\begin{proof}
Condition (1) is evident by definition. Further, the following are equivalent:
\begin{align*}
& (2), \\
& x'\sqcap y'=y'\text{ for all }x,y\in P\text{ with }x\leq y, \\
& y'\leq x'\text{ for all }x,y\in P\text{ with }x\leq y, \\
& '\text{ is antitone}.
\end{align*}
\end{proof}

Now we characterize pseudo-Kleene posets by identities of an assigned commutative meet-directoid.

\begin{theorem}\label{th1}
Let $\mathbf P=(P,\leq,{}')$ be a downward directed poset with a unary operation and $\mathbb D(\mathbf P)$ an assigned meet-directoid. Then $\mathbf P$ is a pseudo-Kleene poset if and only if $\mathbb D(\mathbf P)$ satisfies identities {\rm(1)} -- {\rm(3)}:
\begin{enumerate}
\item[{\rm(3)}] $(z\sqcap x)\sqcap(z\sqcap x')\leq(w\sqcup y)\sqcup(w\sqcup y')$.
\end{enumerate}
\end{theorem}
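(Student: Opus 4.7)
The plan is to reduce to the setting where $'$ is already known to be an antitone involution, and then recast condition (K) as a pointwise inequality between the generators of $L(x,x')$ and $U(y,y')$ supplied by the representation formulas.

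Lemma~\ref{lem1} already shows that identities (1) and (2) hold in $\mathbb D(\mathbf P)$ iff $'$ is an antitone involution on $\mathbf P$. So I would assume (1) and (2), take $'$ to be an antitone involution, and reduce the theorem to the single equivalence: condition (K) holds iff identity (3) holds.

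The key tool is the list of representation formulas stated just before Lemma~\ref{lem1}. Specialised to $y=x'$ and $y=y'$ respectively, they give
\begin{align*}
L(x,x') &= \{(z\sqcap x)\sqcap(z\sqcap x')\mid z\in P\},\\
U(y,y') &= \{(w\sqcup y)\sqcup(w\sqcup y')\mid w\in P\}.
\end{align*}
I would include a quick verification of these two set equalities for the record. For the $L$-side: any expression $(z\sqcap x)\sqcap(z\sqcap x')$ sits below both $z\sqcap x\leq x$ and $z\sqcap x'\leq x'$, so it lies in $L(x,x')$; conversely, if $t\in L(x,x')$ then $t\leq x$ and $t\leq x'$ yield $t\sqcap x=t=t\sqcap x'$, so setting $z=t$ recovers $t=(z\sqcap x)\sqcap(z\sqcap x')$. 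The $U$-side is dual, using $a\sqcup b=(a'\sqcap b')'$ and the antitone involution to transport the $L$-argument across.

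With the representations in hand, the equivalence is pure rewriting. Condition (K) says that for all $x,y\in P$ and for all $a\in L(x,x')$ and $b\in U(y,y')$ one has $a\leq b$. By the two representations this is equivalent to the universally quantified inequality
\[
(z\sqcap x)\sqcap(z\sqcap x')\;\leq\;(w\sqcup y)\sqcup(w\sqcup y')\quad\text{for all }x,y,z,w\in P,
\]
which is precisely identity (3). The converse direction is immediate once one notes that taking $z\in L(x,x')$ and $w\in U(y,y')$ recovers the original elements of the two cones.

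There is no substantive obstacle: the only care needed is in establishing the two generator formulas for $L(x,x')$ and $U(y,y')$ in terms of $\sqcap$ and $\sqcup$, as everything else collapses to a direct reading of (3) as the pointwise form of (K).
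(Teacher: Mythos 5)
Your proof is correct and takes essentially the same route as the paper: the paper's own argument consists of the single remark that (3) is ``easy to check'' to be equivalent to $L(x,x')\leq U(y,y')$ for all $x,y\in P$, followed by an appeal to Lemma~\ref{lem1}, which is exactly the reduction you perform. Your verification of the two generator formulas for $L(x,x')$ and $U(y,y')$ (done after assuming (1) and (2), so that $'$ is an antitone involution and the stated properties of $\sqcup$ are available) just supplies the details the paper leaves implicit.
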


\begin{proof}
It is easy to check that (3) is equivalent to $L(x,x')\leq U(y,y')$ for all $x,y\in P$. Applying Lemma~\ref{lem1} completes the proof.
\end{proof}

In order to characterize Kleene posets in a similar way we need to capture distributivity of posets in the language of commutative meet-directoids.

\begin{theorem}\label{th3}
Let $\mathbf P=(P,\leq,{}')$ be a downward directed poset with a unary operation and $\mathbb D(\mathbf P)$ an assigned meet-directoid. Then $\mathbf P$ is a Kleene poset if and only if $\mathbb D(\mathbf P)$ satisfies identities {\rm(1)} -- {\rm(3)} and implication {\rm(4)}:
\begin{enumerate}
\item[{\rm(4)}] $w\sqcap((t\sqcup x)\sqcup(t\sqcup y))=w\sqcap z=w$ and $s\sqcup((t\sqcap x)\sqcap(t\sqcap z))=s\sqcup((t\sqcap y)\sqcap(t\sqcap z))=s$ for all $t\in P$ imply $w\leq s$.
\end{enumerate}
\end{theorem}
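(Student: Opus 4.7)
The plan is to derive Theorem~\ref{th3} from Theorem~\ref{th1} by translating implication~(4) into the first of the four equivalent distributive identities stated in the introduction, namely
\[
L(U(x,y),z)\approx LU(L(x,z),L(y,z)).
\]
Theorem~\ref{th1} already handles the pseudo-Kleene part through (1)--(3), so the only issue is to see that, modulo those identities, (4) is equivalent to distributivity of $\mathbf P$.

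First I would record the parametric descriptions
\[
L(x,y)=\{(t\sqcap x)\sqcap(t\sqcap y)\mid t\in P\},\qquad
U(x,y)=\{(t\sqcup x)\sqcup(t\sqcup y)\mid t\in P\},
\]
which are dual to each other. The $\subseteq$-inclusion in the second is got by choosing $t=u$ for a prescribed $u\in U(x,y)$: since $u$ is comparable with $x$ and with $y$, we have $t\sqcup x=u$ and $t\sqcup y=u$, hence $(t\sqcup x)\sqcup(t\sqcup y)=u\sqcup u=u$. The $\supseteq$-inclusion is immediate because every such expression majorises both $x$ and $y$. The $L$-identity, already stated in the paper, is obtained dually (and is the main place where one uses that the directoid operation agrees with the meet on comparable elements).

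Next I would unwind (4) using the elementary equivalences $a\sqcap b=a\iff a\leq b\iff a\sqcup b=b$. The first block of hypotheses, $w\sqcap\bigl((t\sqcup x)\sqcup(t\sqcup y)\bigr)=w$ for every $t\in P$ together with $w\sqcap z=w$, translates into $w\leq u$ for every $u\in U(x,y)$ and $w\leq z$, i.e.\ $w\in L(U(x,y),z)$. Similarly, the second block translates into $s\geq v$ for every $v\in L(x,z)\cup L(y,z)$, i.e.\ $s\in U(L(x,z),L(y,z))$. So implication~(4) asserts exactly
\[
L(U(x,y),z)\leq U(L(x,z),L(y,z))
\]
in the extended order on subsets, which in turn is equivalent to $L(U(x,y),z)\subseteq LU(L(x,z),L(y,z))$.

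Combining this with the always-valid reverse inclusion $LU(L(x,z),L(y,z))\subseteq L(U(x,y),z)$ recalled in the introduction yields the first distributive identity, and hence distributivity. Conversely, if $\mathbf P$ is distributive then $L(U(x,y),z)=LU(L(x,z),L(y,z))$, so $w\in L(U(x,y),z)$ forces $w\leq s$ for every $s\in U(L(x,z),L(y,z))$, giving~(4). The only mildly delicate point in the whole argument is verifying that varying $t$ over $P$ really sweeps out the full sets $L(x,y)$ and $U(x,y)$; this is precisely the place where the trick of taking $t$ to be the target element is used, together with the fact that $\sqcap$ and $\sqcup$ reduce to the genuine meet and join on comparable pairs.
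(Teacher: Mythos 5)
Your proposal is correct and follows essentially the same route as the paper: translate the two blocks of hypotheses in (4) into $w\in L(U(x,y),z)$ and $s\in U(L(x,z),L(y,z))$ via the parametric descriptions of $L(x,y)$ and $U(x,y)$, observe that (4) then says exactly $L(U(x,y),z)\subseteq LU(L(x,z),L(y,z))$, i.e.\ distributivity, and finish by Theorem~\ref{th1}. The only difference is that you also verify the parametric descriptions themselves, which the paper states without proof earlier in the section.
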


\begin{proof}
Since
\begin{align*}
& U(x,y)=\{(t\sqcup x)\sqcup(t\sqcup y)\mid t\in P\}, \\
& w\sqcap u=w\text{ is equivalent to }w\in L(u),
\end{align*}
$w\sqcap((t\sqcup x)\sqcup(t\sqcup y))=w\sqcap z=w$ is equivalent to $w\in L(U(x,y),z)$. Further, since
\begin{align*}
& L(x,z)=\{(t\sqcap x)\sqcap(t\sqcap z)\mid t\in P\}, \\
& L(y,z)=\{(t\sqcap y)\sqcap(t\sqcap z)\mid t\in P\}, \\
& s\sqcup u=s\text{ is equivalent to }s\in U(u),
\end{align*}
$s\sqcup((t\sqcap x)\sqcap(t\sqcap z))=s\sqcup((t\sqcap y)\sqcap(t\sqcap z))=s$ is equivalent to $s\in U(L(x,z),L(y,z))$. Hence the following are equivalent:
\begin{align*}
& (4), \\
& w\in L(U(x,y),z)\text{ and }s\in U(L(x,z),L(y,z))\text{ imply }w\leq s, \\
& L(U(x,y),z)\subseteq LU(L(x,z),L(y,z)), \\
& \mathbf P\text{ is distributive}.
\end{align*}
Applying Theorem~\ref{th1} completes the proof.
\end{proof}

Let us note that the class of all directoids assigned to downward directed pseudo-Kleene posets forms a variety due to Theorem~\ref{th1}. As shown in \cite{CL11} and \cite{CKL}, every variety of directoids with an antitone involution is congruence distributive.

\begin{definition}
A {\em strong pseudo-Kleene poset} is a poset $(P,\leq,{}')$ with an antitone involution satisfying
\begin{enumerate}
\item[{\rm(S)}] $x\parallel y$ implies $L(x,x')=L(y,y')$.
\end{enumerate}
\end{definition}

\begin{lemma}\label{lem2}
Let $\mathbf P=(P,\leq,{}')$ be a strong pseudo-Kleene poset. Then $\mathbf P$ is a pseudo-Kleene poset.
\end{lemma}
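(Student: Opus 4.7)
The plan is to unwind both conditions (K) and (S) into pointwise statements and then combine them. Concretely, I would fix arbitrary $x,y\in P$ and arbitrary $u\in L(x,x')$, $v\in U(y,y')$, and show $u\leq v$; this is precisely what (K) asks for. The argument splits into a comparable case (which uses only the antitone involution) and an incomparable case (where (S) is invoked directly).

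First I would extract two useful auxiliary inequalities. Since $u\leq x$ and $u\leq x'$, applying the antitone involution to the second gives $x\leq u'$, so chaining yields $u\leq x\leq u'$, i.e.\ $u\leq u'$. Dually, from $y\leq v$ and $y'\leq v$ we get $v'\leq y\leq v$, i.e.\ $v'\leq v$. In particular, $u\in L(u,u')$ since $u\leq u$ and $u\leq u'$.

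Next I would handle the case where $u$ and $v$ are comparable. If $u\leq v$ there is nothing to prove. If instead $v\leq u$, then $u'\leq v'$ by antitonicity, and the chain $u\leq u'\leq v'\leq v$ still gives $u\leq v$. For the case $u\parallel v$, I apply (S) to conclude $L(u,u')=L(v,v')$. Since $u\in L(u,u')$ by the observation above, $u\in L(v,v')$, so in particular $u\leq v$.

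Combining the two cases yields $u\leq v$ for every choice of $u\in L(x,x')$ and $v\in U(y,y')$, which is exactly condition (K); together with the hypothesis that $'$ is an antitone involution, this shows that $\mathbf P$ is a pseudo-Kleene poset. The only mildly subtle step is noting that membership of $u$ in $L(u,u')$ (needed to pull something out of the equality $L(u,u')=L(v,v')$) requires the preliminary inequality $u\leq u'$; the remaining manipulations are straightforward uses of antitonicity.
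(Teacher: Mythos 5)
Your proof is correct, but it decomposes the problem differently from the paper. The paper fixes the two parameters $a,b$ of condition (K) and splits on whether $a\leq b$, $b\leq a$, or $a\parallel b$, invoking (S) on the pair $(a,b)$ in the last case and finishing with the set-level chains $L(a,a')\leq a\leq b\leq U(b,b')$, resp.\ $L(a,a')\leq a'\leq b'\leq U(b,b')$, resp.\ $L(a,a')=L(b,b')\leq U(b,b')$. You instead fix witnesses $u\in L(x,x')$ and $v\in U(y,y')$ and split on the comparability of $u$ and $v$, invoking (S) on the pair $(u,v)$; this forces you to first establish the auxiliary inequalities $u\leq u'$ and $v'\leq v$ and the membership $u\in L(u,u')$, none of which the paper needs. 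Both arguments are sound. One cosmetic oddity of your route: in the case $u\parallel v$ you derive $u\leq v$, which contradicts the case hypothesis, so that case is in fact vacuous; the derivation of $u\leq v$ from the (inconsistent) assumptions is still logically valid, so there is no gap, but it is worth being aware that the comparable case already covers every pair $(u,v)$ that actually occurs. A modest payoff of your version is that it makes visible that (S) is only ever applied to pairs $(u,v)$ satisfying $u\leq u'$ and $v'\leq v$; the paper's version is shorter and applies (S) exactly as stated.
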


\begin{proof}
Let $a,b\in P$. \\
If $a\leq b$ then $L(a,a')\leq a\leq b\leq U(b,b')$. \\
If $b\leq a$ then $a'\leq b'$ and hence $L(a,a')\leq a'\leq b'\leq U(b,b')$. \\
If $a\parallel b$ then $L(a,a')=L(b,b')\leq U(b,b')$.
\end{proof}

\begin{example}\label{ex1}
The poset visualized in Figure~6 is a strong pseudo-Kleene poset which is not a lattice and hence not a pseudo-Kleene algebra:
\vspace*{-2mm}
\begin{center}
\setlength{\unitlength}{7mm}
\begin{picture}(8,16)
\put(4,1){\circle*{.3}}
\put(4,3){\circle*{.3}}
\put(4,5){\circle*{.3}}
\put(1,7){\circle*{.3}}
\put(3,7){\circle*{.3}}
\put(5,7){\circle*{.3}}
\put(7,7){\circle*{.3}}
\put(1,9){\circle*{.3}}
\put(3,9){\circle*{.3}}
\put(5,9){\circle*{.3}}
\put(7,9){\circle*{.3}}
\put(4,11){\circle*{.3}}
\put(4,13){\circle*{.3}}
\put(4,15){\circle*{.3}}
\put(4,5){\line(0,-1)4}
\put(4,5){\line(-3,2)3}
\put(4,5){\line(-1,2)1}
\put(4,5){\line(1,2)1}
\put(4,5){\line(3,2)3}
\put(1,7){\line(0,1)2}
\put(1,7){\line(1,1)2}
\put(3,7){\line(-1,1)2}
\put(3,7){\line(0,1)2}
\put(5,7){\line(0,1)2}
\put(5,7){\line(1,1)2}
\put(7,7){\line(-1,1)2}
\put(7,7){\line(0,1)2}
\put(4,11){\line(-3,-2)3}
\put(4,11){\line(-1,-2)1}
\put(4,11){\line(1,-2)1}
\put(4,11){\line(3,-2)3}
\put(4,11){\line(0,1)4}
\put(3.85,.25){$0$}
\put(4.4,2.85){$a$}
\put(4.4,4.85){$b$}
\put(.3,6.85){$c$}
\put(3.4,6.85){$d$}
\put(4.3,6.85){$e$}
\put(7.4,6.85){$f$}
\put(.3,8.85){$f'$}
\put(3.4,8.85){$e'$}
\put(4.3,8.85){$d'$}
\put(7.4,8.85){$c'$}
\put(4.4,10.85){$b'$}
\put(4.4,12.85){$a'$}
\put(3.3,15.4){$1=0'$}
\put(3.2,-.75){{\rm Fig.\ 6}}
\end{picture}
\end{center}
\vspace*{4mm}
\end{example}

We are going to determine the class of directoids assigned to strong pseudo-Kleene posets.

\begin{theorem}\label{th2}
Let $\mathbf P=(P,\leq,{}')$ be a downward directed poset with a unary operation and $\mathbb D(\mathbf P)$ an assigned meet-directoid. Then $\mathbf P$ is a strong pseudo-Kleene poset if and only if $\mathbb D(\mathbf P)$ satisfies identities {\rm(1)} and {\rm(2)} and implication {\rm(5)}:
\begin{enumerate}
\item[{\rm(5)}] $x\neq x\sqcap y\neq y$ and $x\sqcap z=x'\sqcap z=z$ imply $y\sqcap z=y'\sqcap z=z$.
\end{enumerate}
\end{theorem}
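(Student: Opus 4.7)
The plan is to reduce this to Lemma~\ref{lem1}, which already identifies identities (1) and (2) with the assertion that ${}'$ is an antitone involution on $\mathbf P$. Granted that, the whole theorem boils down to showing that implication (5), read inside $\mathbb D(\mathbf P)$, is equivalent to condition (S) of the definition of a strong pseudo-Kleene poset. Since the class of strong pseudo-Kleene posets is defined by requiring ${}'$ to be an antitone involution plus (S), this will finish both directions.

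The key step is a direct dictionary between the directoid equations appearing in (5) and order-theoretic statements in $\mathbf P$. Since $\mathbb P(\mathbb D(\mathbf P))=\mathbf P$, the relation $u\leq v$ is recovered from $\sqcap$ via $u\sqcap v=u$. By commutativity and idempotency, $x\sqcap y=x$ is equivalent to $x\leq y$ and $x\sqcap y=y$ to $y\leq x$; moreover, since $x\sqcap y\in L(x,y)$, the only way to have $x\sqcap y\in\{x,y\}$ is when $x$ and $y$ are comparable. Hence the hypothesis $x\neq x\sqcap y\neq y$ is just another way of writing $x\parallel y$. Similarly, $x\sqcap z=z$ is equivalent to $z\leq x$ and $x'\sqcap z=z$ to $z\leq x'$, so $x\sqcap z=x'\sqcap z=z$ is equivalent to $z\in L(x,x')$, and analogously $y\sqcap z=y'\sqcap z=z$ is equivalent to $z\in L(y,y')$. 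Thus, as a universally quantified statement, (5) reads literally: for all $x,y\in P$ with $x\parallel y$ and all $z\in L(x,x')$, we have $z\in L(y,y')$, i.e., $x\parallel y$ implies $L(x,x')\subseteq L(y,y')$.

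To conclude the equivalence with (S), I would invoke the symmetry of the relation $\parallel$: applying (5) to the triple $(y,x,z)$ (permitted since the identities in the hypothesis of (5) are symmetric in $x$ and $y$ by commutativity) gives the reverse inclusion $L(y,y')\subseteq L(x,x')$, whence $L(x,x')=L(y,y')$ whenever $x\parallel y$, which is precisely (S). The converse direction is immediate: if (S) holds, both inclusions hold and (5) follows at once. I do not anticipate a genuine obstacle; the entire argument is a translation, with the only point requiring care being the routine identification of $\leq$ with the directoid equation $u\sqcap v=u$ and the observation that incomparability of $x,y$ forces $x\sqcap y$ to lie strictly below both.
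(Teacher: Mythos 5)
Your proposal is correct and follows essentially the same route as the paper: translate the directoid equations into the order-theoretic statements $x\parallel y$, $z\in L(x,x')$ and $z\in L(y,y')$, observe that (5) then says $x\parallel y$ implies $L(x,x')\subseteq L(y,y')$, upgrade the inclusion to an equality by the symmetry of $\parallel$, and invoke Lemma~\ref{lem1} for identities (1) and (2). The only difference is that you make the symmetry step explicit, whereas the paper passes from the inclusion to the equality without comment.
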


\begin{proof}
Since
\begin{align*}
& x\sqcap y=x\text{ is equivalent to }x\leq y, \\
& x\sqcap y=y\text{ is equivalent to }y\leq x
\end{align*}
we have that $x\neq x\sqcap y\neq y$ is equivalent to $x\parallel y$. Further, since
\begin{align*}
& x\sqcap z=x'\sqcap z=z\text{ is equivalent to }z\in L(x,x'), \\
& y\sqcap z=y'\sqcap z=z\text{ is equivalent to }z\in L(y,y'),
\end{align*}
the following are equivalent:
\begin{align*}
& (5), \\
& x\parallel y\text{ and }z\in L(x,x')\text{ imply }z\in L(y,y'), \\
& x\parallel y\text{ implies }L(x,x')\subseteq L(y,y'), \\
& x\parallel y\text{ implies }L(x,x')=L(y,y').
\end{align*}
Applying Lemma~\ref{lem1} completes the proof.
\end{proof}

The following concept will be used in the sequel.

\begin{definition}
A {\em strict pseudo-Kleene poset} is a bounded poset $(P,\leq,{}')$ with an antitone involution satisfying
\[
x,y\neq0,1\text{ implies }L(x,x')=L(y,y').
\]
A {\em strict Kleene poset} is a distributive strict pseudo-Kleene poset.
\end{definition}

Obviously, every strict pseudo-Kleene poset is a strong pseudo-Kleene poset and hence a pseudo-Kleene poset according to Lemma~\ref{lem2}, but not conversely (see Example~\ref{ex1} where $L(a,a')=a\neq b=L(b,b')$).

Of course, every Boolean poset, i.e.\ every bounded distributive poset where the antitone involution which is a complementation (i.e.\ $L(x,x')\approx\{0\}$ and $U(x,x')=\{1\}$), is a strict Kleene poset. In the next example we show a strict Kleene poset which is not Boolean.

\begin{example}
The poset visualized in Figure~7 is a strict Kleene poset which is not a lattice and hence not a Kleene algebra:
\vspace*{-2mm}
\begin{center}
\setlength{\unitlength}{7mm}
\begin{picture}(8,14)
\put(4,1){\circle*{.3}}
\put(4,3){\circle*{.3}}
\put(1,5){\circle*{.3}}
\put(3,5){\circle*{.3}}
\put(5,5){\circle*{.3}}
\put(7,5){\circle*{.3}}
\put(1,7){\circle*{.3}}
\put(7,7){\circle*{.3}}
\put(1,9){\circle*{.3}}
\put(3,9){\circle*{.3}}
\put(5,9){\circle*{.3}}
\put(7,9){\circle*{.3}}
\put(4,11){\circle*{.3}}
\put(4,13){\circle*{.3}}
\put(4,3){\line(0,-1)2}
\put(4,3){\line(-3,2)3}
\put(4,3){\line(-1,2)1}
\put(4,3){\line(1,2)1}
\put(4,3){\line(3,2)3}
\put(4,11){\line(-3,-2)3}
\put(4,11){\line(-1,-2)1}
\put(4,11){\line(1,-2)1}
\put(4,11){\line(3,-2)3}
\put(1,5){\line(0,1)4}
\put(7,5){\line(0,1)4}
\put(1,7){\line(1,1)2}
\put(1,5){\line(1,1)4}
\put(3,5){\line(1,1)4}
\put(5,5){\line(1,1)2}
\put(3,5){\line(-1,1)2}
\put(5,5){\line(-1,1)4}
\put(7,5){\line(-1,1)4}
\put(7,7){\line(-1,1)2}
\put(4,11){\line(0,1)2}
\put(3.85,.25){$0$}
\put(4.4,2.85){$a$}
\put(.3,4.85){$b$}
\put(2.3,4.85){$c$}
\put(5.4,4.85){$d$}
\put(7.4,4.85){$e$}
\put(.3,6.85){$f$}
\put(7.4,6.85){$f'$}
\put(2.3,8.85){$d'$}
\put(.3,8.85){$e'$}
\put(5.4,8.85){$c'$}
\put(7.4,8.85){$b'$}
\put(4.4,10.85){$a'$}
\put(3.3,13.4){$1=0'$}
\put(3.2,-.75){{\rm Fig.\ 7}}
\end{picture}
\end{center}
\vspace*{4mm}
\end{example}

Analogously as above, also strict pseudo-Kleene posets and strict Kleene posets can be characterized by means of properties of assigned meet-directoids.

\begin{theorem}
Let $\mathbf P=(P,\leq,{}')$ be a downward directed poset with a unary operation and $\mathbb D(\mathbf P)$ an assigned meet-directoid. Then the following hold:
\begin{enumerate}[{\rm(i)}]
\item $\mathbf P$ is a strict pseudo-Kleene poset if and only if $\mathbb D(\mathbf P)$ satisfies identities {\rm(1)} and {\rm(2)} and implication {\rm(6)}:
\begin{enumerate}
\item[{\rm(6)}] $x,y\notin\{0,1\}$ and $x\sqcap z=x'\sqcap z=z$ imply $y\sqcap z=y'\sqcap z=z$.
\end{enumerate}
\item $\mathbf P$ is a strict Kleene poset if and only if $\mathbb D(\mathbf P)$ satisfies identities {\rm(1)} and {\rm(2)} and implications {\rm(4)} and {\rm(6)}.
\end{enumerate}
\end{theorem}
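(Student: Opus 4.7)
The plan is to mimic the pattern of Theorems~\ref{th3} and~\ref{th2}: translate the defining condition of a strict (pseudo-)Kleene poset into the directoid language using the standard equivalences $x\sqcap y=x\Leftrightarrow x\leq y$ and $x\sqcap y=y\Leftrightarrow y\leq x$, and then invoke Lemma~\ref{lem1} to cover the antitone involution axioms (1) and (2). Since $\mathbf P$ is bounded, we treat $0$ and $1$ as constants of $\mathbb D(\mathbf P)$, so the hypothesis ``$x,y\notin\{0,1\}$'' in~(6) is a legitimate quasi-identity.

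For part~(i), I first observe that $x\sqcap z=x'\sqcap z=z$ is equivalent to $z\leq x$ and $z\leq x'$, i.e.\ $z\in L(x,x')$, and analogously $y\sqcap z=y'\sqcap z=z$ is equivalent to $z\in L(y,y')$. Hence implication~(6) is equivalent to
\[
x,y\notin\{0,1\}\text{ and }z\in L(x,x')\text{ imply }z\in L(y,y')
\]
for all $x,y,z\in P$. Fixing $x,y\notin\{0,1\}$ and letting $z$ range over $P$, this says $L(x,x')\subseteq L(y,y')$; interchanging $x$ and $y$ gives the reverse inclusion, so the condition is equivalent to
\[
x,y\notin\{0,1\}\text{ implies }L(x,x')=L(y,y'),
\]
which is precisely the defining property of a strict pseudo-Kleene poset. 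Combined with Lemma~\ref{lem1}, which tells us that (1) and (2) together encode exactly the antitone involution, this yields~(i).

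For part~(ii), note that a strict Kleene poset is by definition a distributive strict pseudo-Kleene poset. The proof of Theorem~\ref{th3} shows that, in the presence of the antitone involution, distributivity of $\mathbf P$ is equivalent to implication~(4) on $\mathbb D(\mathbf P)$. Combining this with part~(i) gives immediately that $\mathbf P$ is a strict Kleene poset if and only if $\mathbb D(\mathbf P)$ satisfies (1), (2), (4), and (6).

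There is no genuine obstacle; the only point requiring attention is the bookkeeping needed to see that the hypothesis ``$x,y\notin\{0,1\}$'' plays the same role in (6) as ``$x\neq x\sqcap y\neq y$'' (i.e.\ $x\parallel y$) plays in (5) of Theorem~\ref{th2}, after which the translation is entirely routine via the same equivalences already used in Theorems~\ref{th3} and~\ref{th2}.
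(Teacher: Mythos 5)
Your proposal is correct and follows essentially the same route as the paper: translate the hypotheses of (6) into $z\in L(x,x')$ and $z\in L(y,y')$, use symmetry in $x$ and $y$ to upgrade the inclusion to an equality, invoke Lemma~\ref{lem1} for (1) and (2), and derive (ii) from (i) together with the equivalence of distributivity and (4) established in the proof of Theorem~\ref{th3}.
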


\begin{proof}
\
\begin{enumerate}[(i)]
\item The following are equivalent:
\begin{align*}
& (6), \\
& x,y\notin\{0,1\}\text{ and }z\in L(x,x')\text{ imply }z\in L(y,y'), \\
& x,y\notin\{0,1\}\text{ implies }L(x,x')\subseteq L(y,y'), \\
& x,y\notin\{0,1\}\text{ implies }L(x,x')=L(y,y').
\end{align*}
Applying Lemma~\ref{lem1} completes the proof.
\item This follows from (i) and the proof of Theorem~\ref{th3}.
\end{enumerate}
\end{proof}

\section{Residuation in Kleene posets}

In the study of non-classical logics we prefer to have a logical connective implication which enables deduction, i.e.\ to derive new propositions from given ones. The question arises how to define implication in the logic based on pseudo-Kleene posets and Kleene posets. Usually implication is considered to be well-behaved if it is related with conjunction by means of adjointness.

In the following we will study residuation in strict pseudo-Kleene posets. Contrary to the case of lattices, $\odot$ and $\rightarrow$ cannot be binary operations, they are only operators, i.e.\ mappings from $P^2$ to $2^P$. We extend $\odot$ to $(2^P)^2$ by defining
\[
A\odot B:=\bigcap_{x\in A,y\in B}(x\odot y)
\]
for all $A,B\subseteq P$.

\begin{definition}
A {\em Kleene residuated poset} is an ordered six-tuple $(P,\leq,\odot,\rightarrow,0,1)$ \\
where $(P,\leq,0,1)$ is a bounded strict pseudo-Kleene poset and $\odot$ and $\rightarrow$ are mappings from $P^2$ to $2^P$ satisfying the following conditions for all $x,y,z\in P$:
\begin{itemize}
\item $x\odot y\approx y\odot x$,
\item $x\odot1\approx1\odot x\approx L(x)$,
\item $(x\odot y)\odot z\approx x\odot(y\odot z)$,
\item $x\odot y\leq z$ if and only if $x\leq y\rightarrow z$ {\rm(}adjointness{\rm)}.
\end{itemize}
\end{definition}

Let $(P,\leq,{}')$ be a poset with an antitone involution. Define mappings $\odot$ and $\rightarrow$ from $P^2$ to $2^P$ as follows:
\begin{enumerate}
\item[(R)] $\quad x\odot y:=\left\{
\begin{array}{ll}
0      & \text{if }x\leq y', \\
L(x,y) & \text{otherwise}
\end{array}
\right.
\quad\quad\quad x\rightarrow y:=\left\{
\begin{array}{ll}
1       & \text{if }x\leq y, \\
U(x',y) & \text{otherwise}
\end{array}
\right.$
\end{enumerate}

\begin{theorem}\label{th4}
Let $(P,\leq,{}',0,1)$ be a bounded strict Kleene poset satisfying
\begin{enumerate}
\item[{\rm(7)}] $L(x,y)\neq0$ for all $x,y\in P\setminus\{0\}$
\end{enumerate}
and let $\odot$ and $\rightarrow$ be defined by {\rm(R)}. Then $(P,\leq,\odot,\rightarrow,0,1)$ is a Kleene residuated poset.
\end{theorem}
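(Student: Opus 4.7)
The plan is to verify the four axioms in order. Commutativity of $\odot$ follows immediately from the antitone involution (which gives $x \leq y'$ iff $y \leq x'$) together with $L(x,y) = L(y,x)$. The unit law $x \odot 1 = L(x) = 1 \odot x$ is a trivial case split on whether $x = 0$. Associativity is in fact trivial under the given intersection extension of $\odot$: since $x \odot y$ always contains $0$ (it is either $\{0\}$ or $L(x,y) \ni 0$), the intersection defining $(x \odot y) \odot z$ includes the term $0 \odot z = \{0\}$ and so collapses to $\{0\}$; the same reasoning gives $x \odot (y \odot z) = \{0\}$.

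The substance is adjointness: $x \odot y \leq z$ iff $x \leq y \rightarrow z$. I split into cases. If $x \leq y'$, both sides hold: the left is $\{0\} \leq z$, and on the right either $y \leq z$ gives $y \rightarrow z = 1$, or else $y \rightarrow z = U(y',z)$ and $x \leq y' \leq u$ for every $u \in U(y',z)$. If $y \leq z$, both sides also hold since $L(x,y) \subseteq L(y) \subseteq L(z)$ and $y \rightarrow z = 1$. The remaining boundary cases---$z = 0$ with $x, y \neq 0$; $y = 1$; and $x = 1$ with $y, z$ middle and $y \not\leq z$---are handled by condition~(7) together with its dual, which follows from~(7) via the antitone involution. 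For instance, when $z = 0$ the right side reduces to $x \leq y'$, and the left side $L(x,y) \leq 0$ fails by~(7) because $L(x,y)$ strictly contains $\{0\}$; when $x = 1$ the right side fails because the dual of~(7) yields $U(y',z) \neq \{1\}$.

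This reduces to the core sub-case: $x, y, z$ are all middle with $x \not\leq y'$ and $y \not\leq z$. I must show $L(x,y) \leq z$ iff $x \in LU(y',z)$. For the backward direction, apply distributive identity~(i) with variables $(y', z, y)$ to obtain $L(U(y',z), y) = LU(L(y,y'), L(y,z)) = LU(M, L(y,z))$, where $M = L(y,y')$. Strictness gives $M = L(z,z') \subseteq L(z)$, so $z$ is an upper bound of both $M$ and $L(y,z)$, i.e., $z \in U(M, L(y,z))$. Any $w \in L(x,y)$ satisfies $w \leq x \in LU(y',z)$ and $w \leq y$, hence $w$ lies in $LU(y',z) \cap L(y) = L(U(y',z), y) = LU(M, L(y,z))$, forcing $w \leq z$.

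The forward direction is the main obstacle. Identity~(iv) with $(x, y, y')$ yields $LU(L(x,y), y') = L(U(x,y'), U(y,y'))$, and the hypothesis $L(x,y) \subseteq L(z)$ gives $U(L(x,y)) \supseteq U(z)$, from which $LU(L(x,y), y') \subseteq LU(y',z)$. It therefore suffices to show $x \in L(U(x,y'), U(y,y'))$. Membership in $L(U(x,y'))$ is automatic since $x \leq v$ for every $v \geq x$. The crucial observation for $L(U(y,y'))$ is that strictness, transported by the involution, yields $U(y,y') = U(z,z')$ for \emph{every} middle $z$; hence every $v \in U(y,y')$ is $\geq$ every middle element and $\geq 0$, so $v$ is an upper bound of $P \setminus \{1\}$. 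Since $L(x,y) \leq z$ together with $y \not\leq z$ forces $x \neq 1$ (otherwise $L(1,y) = L(y) \leq z$ would give $y \leq z$), we conclude $x \leq v$ for every such $v$, completing the proof.
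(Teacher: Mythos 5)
Your proof is correct and follows essentially the same route as the paper's: the same preliminary verifications of commutativity, the unit law and the collapse of associativity to $\{0\}$, the same case split for adjointness with condition (7) disposing of the boundary cases, and the same combination of poset distributivity with strictness in the remaining core case. The only cosmetic difference is in the forward implication of that core case, where you invoke the identity $LU(L(x,y),z)\approx L(U(x,z),U(y,z))$ together with the observation that strictness makes every element of $U(y,y')$ an upper bound of $P\setminus\{1\}$, whereas the paper performs the order-dual computation $U(b',c)\subseteq UL(U(b',a),U(a',a))\subseteq U(a)$.
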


\begin{proof}
Let $a,b,c\in P$. One can easily see that $x\odot0\approx0\odot x\approx0$. Since $0\in L(x,y)$ for all $x,y\in P$ and therefore $0\in x\odot y$ for all $x,y\in P$, we have $0\in a\odot b$ and $0\in b\odot c$ and hence $(a\odot b)\odot c=0=a\odot(b\odot c)$ proving associativity of $\odot$. Because $a\leq b'$ is equivalent to $b\leq a'$ and, moreover, $L(a,b)=L(b,a)$, $\odot$ is commutative. Further,
\begin{align*}
& \text{if }a=0\text{ then }a\odot1=0=L(0)=L(a), \\
& \text{if }a\neq0\text{ then }a\odot1=L(a,1)=L(a).
\end{align*}
The rest follows by commutativity of $\odot$. We consider the following cases:
\begin{itemize}
\item $a\leq b'$ and $b\leq c$. \\
Then $a\odot b=0\leq c$ and $a\leq1=b\rightarrow c$.
\item $a\leq b'$ and $b\not\leq c$. \\
Then $a\odot b=0\leq c$ and $a\leq b'\leq U(b',c)=b\rightarrow c$.
\item $a\not\leq b'$ and $b\leq c$. \\
Then $a\odot b=L(a,b)\leq b\leq c$ and $a\leq1=b\rightarrow c$.
\item $a\not\leq b'$, $b\not\leq c$ and $a=1$. \\
Then $a\odot b=L(a,b)=L(1,b)=L(b)\not\leq c$. Moreover, $b,c'\neq0$ and therefore $b\rightarrow c=U(b',c)=(L(b,c'))'\neq0'=1$ according to (7) whence $a=1\not\leq b\rightarrow c$.
\item $a\not\leq b'$, $b\not\leq c$ and $b=1$. \\
Then the following are equivalent:
\begin{align*}
a\odot b & \leq c, \\
  L(a,b) & \leq c, \\
  L(a,1) & \leq c, \\
    L(a) & \leq c, \\
       a & \leq c, \\
       a & \leq U(c), \\
       a & \leq U(0,c), \\
       a & \leq U(b',c), \\
       a & \leq b\rightarrow c.
\end{align*}
\item $a\not\leq b'$, $b\not\leq c$ and $c=0$. \\
Then $a,b\neq0$ and therefore $a\odot b=L(a,b)\neq0$ according to (7) whence $a\odot b\not\leq c$. Moreover, $a\not\leq U(b')=U(b',0)=U(b',c)=b\rightarrow c$.
\item $a\not\leq b'$, $b\not\leq c$, $a,b\neq1$ and $c\neq0$. \\
Then $a,b,c\neq0,1$. If $a\odot b\leq c$ then
\begin{align*}
b\rightarrow c & =U(b',c)\subseteq U(b',a\odot b)=U(b',L(a,b))=UL(U(b',a),U(b',b))= \\
               & =UL(U(b',a),U(a',a))\subseteq ULU(a)=U(a)
\end{align*}
and hence $a\leq b\rightarrow c$. If, conversely, $a\leq b\rightarrow c$ then
\begin{align*}
a\odot b & =L(a,b)\subseteq L(b\rightarrow c,b)=L(U(b',c),b)=LU(L(b',b),L(c,b))= \\
         & =LU(L(c',c),L(c,b))\subseteq LUL(c)=L(c)
\end{align*}
and hence $a\odot b\leq c$.
\end{itemize}
This shows that in any case $a\odot b\leq c$ is equivalent to $a\leq b\rightarrow c$.
\end{proof}

\begin{remark}
It seems to be impossible to generalize Theorem~\ref{th4} in such a way that the assumption of distributivity is replaced by the weaker assumption of modularity. If, e.g., $L(x,y)$ and $U(x',y)$ in {\rm(R)} are replaced by $L(U(x,y'),y)$ and $U(x',L(x,y))$, respectively, then adjointness cannot be proved in the last case considered in the proof of Theorem~\ref{th4}.
\end{remark}

As it is usual in logics satisfying the double negation law, the connectives conjunction (i.e.\ $\odot$) and implication (i.e.\ $\rightarrow$) can be derived one by the other by means of involution.

\begin{theorem}
Let $\mathbf P=(P,\leq,{}',0,1)$ be a bounded poset with an antitone involution, $\odot$ and $\rightarrow$ defined by {\rm(R)} and $a,b\in P$. Then the following hold:
\begin{enumerate}[{\rm(i)}]
\item $a\odot b=(a\rightarrow b')'$,
\item $a\rightarrow b=(a\odot b')'$.
\end{enumerate}
If, moreover, $\mathbf P$ satisfies {\rm(7)} then
\begin{enumerate}
\item[{\rm(iii)}] $a\odot b=0$ if and only if $a\leq b'$,
\item[{\rm(iv)}] $a\rightarrow b=1$ if and only if $a\leq b$.
\end{enumerate}
If, moreover, $\mathbf P$ is a strict Kleene poset {\rm(}not necessarily satisfying {\rm(7))} then
\begin{enumerate}
\item[{\rm(v)}] If $a\leq b$ and $L(a',b)=0$ then $a=b$.
\end{enumerate}
\end{theorem}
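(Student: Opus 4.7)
The plan is to prove the five parts in two groups. Parts (i) and (ii) will come from direct case analysis on the definition (R), using that $'$ is an antitone involution and thus exchanges the ``trivial'' branch of (R) with the ``nontrivial'' one. Parts (iii) and (iv) will then follow by combining (i) and (ii) with the nontriviality hypothesis (7). Part (v) is the real work and will use the lemma of Section~1 together with one of the four equivalent distributivity identities.

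For (i), I split on whether $a\leq b'$. If $a\leq b'$, then by (R) both $a\odot b=\{0\}$ and $a\rightarrow b'=\{1\}$, so $(a\rightarrow b')'=\{0\}$. If $a\not\leq b'$, then $a\odot b=L(a,b)$ and $a\rightarrow b'=U(a',b')$, and applying $'$ elementwise (antitonicity carries an upper-bound set to the lower-bound set of the image) gives $(U(a',b'))'=L(a'',b'')=L(a,b)$. Part (ii) is completely symmetric: split on $a\leq b$, and use $b''=b$.

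For (iii), the backward direction is immediate from (R). For the forward direction, assume $a\odot b=\{0\}$ and $a\not\leq b'$; then $a\odot b=L(a,b)=\{0\}$. Neither $a$ nor $b$ can be $0$, since that would force $a\leq b'$ trivially, so (7) gives $L(a,b)\neq\{0\}$, a contradiction. Part (iv) then follows from (ii) and (iii): $a\rightarrow b=\{1\}$ iff $(a\odot b')'=\{1\}$ iff $a\odot b'=\{0\}$ iff $a\leq b''=b$.

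The main step is (v). Since a strict Kleene poset is in particular a bounded distributive poset with an antitone involution, and the hypothesis $L(a',b)=\{0\}$ is the same as $L(b,a')=\{0\}$, the lemma of Section~1 applies to $a$ and $b$ and yields in particular $U(a,a')=\{1\}$. I then invoke the first distributivity identity with $x:=a$, $y:=a'$, $z:=b$, giving
\[
L(U(a,a'),b)=LU(L(a,b),L(a',b)).
\]
The left-hand side collapses to $L(1,b)=L(b)$, and, using $a\leq b$ and $L(a',b)=\{0\}$, the right-hand side collapses to $LU(L(a),\{0\})=LU(L(a))=L(a)$ (since $0\in L(a)$ and $U(L(a))=U(a)$). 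Hence $L(a)=L(b)$, so $a=b$. The main obstacle is recognising that Section~1's lemma is precisely the tool that upgrades the seemingly weak hypothesis $L(a',b)=\{0\}$ to the usable $U(a,a')=\{1\}$, after which a single application of distributivity finishes the argument.
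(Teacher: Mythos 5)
Your proposal is correct. Parts (i)--(iv) follow essentially the same route as the paper: the case split on $a\leq b'$ for (i) (with $(U(a',b'))'=L(a,b)$), obtaining (ii) from the involution, and (iii)--(iv) exactly as in the paper via (7) and the equivalence $a\leq b'\Leftrightarrow b\leq a'$. The genuine divergence is in (v). The paper proves (v) by a five-fold case analysis ($a=0$, $a=1$, $b=0$, $b=1$, and $a,b\notin\{0,1\}$) and, in the generic case, computes $U(a)=U(L(a',b),a)=UL(U(a',a),U(b,a))=UL(U(b',b),U(b))=U(b)$, where the key middle step $U(a',a)=U(b',b)$ is exactly where the strictness condition enters. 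You instead apply the Lemma of Section~1 (whose hypothesis $L(b,a')=\{0\}$ is the same as $L(a',b)=\{0\}$) to obtain $U(a,a')=\{1\}$, and then a single instance of the distributive identity $L(U(a,a'),b)=LU(L(a,b),L(a',b))$ collapses to $L(b)=L(a)$. Both arguments are valid; yours is shorter, needs no case distinction, and --- notably --- never uses the strictness hypothesis, so it actually establishes (v) for every bounded distributive poset with an antitone involution, a slightly more general statement than the one claimed.
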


\begin{proof}
\
\begin{enumerate}[(i)]
\item
\begin{align*}
& \text{If }a\leq b'\text{ then }a\odot b=0=1'=(a\rightarrow b')', \\
& \text{if }a\not\leq b'\text{ then }a\odot b=L(a,b)=(U(a',b'))'=(a\rightarrow b')',
\end{align*}
\item According to (i) we have $a\rightarrow b=(a\rightarrow b'')''=(a\odot b')'$.
\item
\begin{align*}
& \text{If }a\leq b'\text{ then }a\odot b=0\text{ by definition}, \\
& \text{if }a\not\leq b'\text{ then }a,b\neq0\text{ and hence }a\odot b=L(a,b)\neq0\text{ according to (7)}.
\end{align*}
\item According to (ii) and (iii) the following are equivalent:
\begin{align*}
a\rightarrow b & =1, \\
  (a\odot b')' & =1, \\
     a\odot b' & =0, \\
             a & \leq b.
\end{align*}
\item
\begin{align*}
& \text{If }a=0\text{ then }L(b)=L(1,b)=L(a',b)=0\text{ and hence }a=0=b, \\
& \text{if }a=1\text{ then }b=1\text{ because of }a\leq b\text{ and hence }a=1=b, \\
& \text{if }b=0\text{ then }a=0\text{ because of }a\leq b\text{ and hence }a=1=b, \\
& \text{if }b=1\text{ then }L(a')=L(a',1)=L(a',b)=0\text{ and hence }a'=0,\text{ i.e.\ }a=1=b, \\
& \text{if }a,b\neq0,1\text{ then }U(a)=U(0,a)=U(L(a',b),a)=UL(U(a',a),U(b,a))= \\
& \quad =UL(U(b',b),U(b))=ULU(b)=U(b)\text{ which implies }a=b.
\end{align*}
\end{enumerate}
\end{proof}

\section{Twist construction}

Now we show how to construct pseudo-Kleene posets and Kleene posets from posets and distributive posets, respectively. We embed an arbitrary given poset into a pseudo-Kleene poset by using the so-called twist construction known already for distributive lattices.

For an arbitrary poset $\mathbf Q=(Q,\leq)$ and an arbitrary element $a$ of $Q$ we define
\[
P_a(\mathbf Q):=\{(x,y)\in Q^2\mid L(x,y)\leq a\leq U(x,y)\}.
\]
Let $p_1$ and $p_2$ denote the first and second projection from $P_a(\mathbf Q)$ to $Q$, respectively. In $P_a(\mathbf Q)$ we introduce a binary relation $\leq$ and a unary operation $'$ as follows:
\begin{align*}
(x,y)\leq(z,v) & :\Leftrightarrow x\leq z\text{ and }y\geq v, \\
        (x,y)' & :=(y,x).
\end{align*}
Put $\mathbb P_a(\mathbf Q):=(P_a(\mathbf Q),\leq,{}')$.

\begin{theorem}
Let $\mathbf Q=(Q,\leq)$ be a poset and $a\in Q$. Then the following hold:
\begin{enumerate}[{\rm(i)}]
\item $\mathbb P_a(\mathbf Q)$ is a pseudo-Kleene poset with fixed point $(a,a)$,
\item the mapping $x\mapsto(x,a)$ is an embedding of $\mathbf Q$ into $\mathbb P_a(\mathbf Q)$,
\item $\mathbf Q$ is distributive if and only if $\mathbb P_a(\mathbf Q)$ is a Kleene poset.
\end{enumerate}
\end{theorem}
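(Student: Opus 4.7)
My plan is to handle (i) and (ii) by unpacking definitions, and devote the main effort to (iii).

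For (i), the order on $P_a(\mathbf Q)$ is inherited componentwise from $\mathbf Q\times\mathbf Q^{\mathrm{op}}$, so reflexivity, antisymmetry and transitivity are immediate. The operation $(x,y)':=(y,x)$ maps $P_a(\mathbf Q)$ into itself because $L$ and $U$ are symmetric in their arguments, is involutive by inspection, and order-reversing because swapping the two coordinates turns $\leq$ into its componentwise converse. The heart of (i) is condition (K): any $(s,t)\in L((x,y),(y,x))$ satisfies $s\in L(x,y)$ and $t\in U(x,y)$, hence $s\leq a\leq t$ thanks to $(x,y)\in P_a(\mathbf Q)$; symmetrically any $(u,w)\in U((z,v),(v,z))$ satisfies $u\geq a\geq w$; combining yields $(s,t)\leq(u,w)$. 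The pair $(a,a)$ provides the fixed point. For (ii), $(x,a)\in P_a(\mathbf Q)$ always holds since $L(x,a)\subseteq L(a)$ and $U(x,a)\subseteq U(a)$, and the embedding property reduces to the equivalence $(x,a)\leq(y,a)\Leftrightarrow x\leq y$.

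For (iii), I would first establish the componentwise descriptions
\begin{align*}
L((x_1,y_1),(x_2,y_2)) & =\{(s,t)\in P_a(\mathbf Q)\mid s\in L(x_1,x_2),\ t\in U(y_1,y_2)\}, \\
U((x_1,y_1),(x_2,y_2)) & =\{(s,t)\in P_a(\mathbf Q)\mid s\in U(x_1,x_2),\ t\in L(y_1,y_2)\}.
\end{align*}
For the forward implication, assume $\mathbf Q$ distributive; to verify the inclusion $L(U(p_1,p_2),p_3)\subseteq LU(L(p_1,p_3),L(p_2,p_3))$ in $\mathbb P_a(\mathbf Q)$, I would decouple first and second coordinates and apply the identity $L(U(x_1,x_2),x_3)\subseteq LU(L(x_1,x_3),L(x_2,x_3))$ of $\mathbf Q$ on the first coordinate, and its dual (one of the other three equivalent distributivity identities, corresponding to applying the same identity in $\mathbf Q^{\mathrm{op}}$) on the second. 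For the backward implication, I would use the embedding $\iota\colon x\mapsto(x,a)$ from (ii), observing that since $(z,a)\in P_a(\mathbf Q)$ holds for every $z\in Q$, we have $\iota^{-1}(L(\iota(A)))=L(A)$ and $\iota^{-1}(U(\iota(A)))=U(A)$ for every $A\subseteq Q$; a short calculation extends this preservation to nested expressions such as $LU$, so that applying a distributivity identity valid in $\mathbb P_a(\mathbf Q)$ to $\iota$-images and pulling it back along $\iota$ yields the corresponding distributivity identity in $\mathbf Q$.

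The main obstacle will be the $P_a(\mathbf Q)$-membership bookkeeping in the forward direction of (iii). The operators $L$ and $U$ in $\mathbb P_a(\mathbf Q)$ quantify only over pairs satisfying the sandwich condition $L(s,t)\leq a\leq U(s,t)$, and this side condition must not spoil the componentwise decoupling. Concretely, the calculation requires verifying that given $p_1,p_2,p_3\in P_a(\mathbf Q)$, the intermediate pairs $(u,w)$ produced as upper or lower bounds from the coordinates automatically lie in $P_a(\mathbf Q)$, so that the quantifications appearing in $L(U(\ldots))$ inside $\mathbb P_a(\mathbf Q)$ effectively reduce to the quantifications on each coordinate inside $\mathbf Q$.
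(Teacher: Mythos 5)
Your overall route coincides with the paper's: parts (i) and (ii) are handled exactly as there (the sandwich condition $L(x,y)\leq a\leq U(x,y)$ gives (K), and $x\mapsto(x,a)$ is the embedding), and for (iii) the paper likewise decouples the two coordinates, obtaining the identity $L(U(x_1,x_2),x_3)\approx LU(L(x_1,x_3),L(x_2,x_3))$ in the first coordinate and its order-dual in the second, and then invokes the equivalence of the four forms of distributivity; your pull-back along $\iota$ for the converse is a reasonable way to make explicit what the paper leaves implicit in its displayed computation.

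There is, however, a genuine gap at exactly the spot you flag as ``the main obstacle.'' The claim you propose to verify --- that the intermediate pairs $(u,w)$ built from coordinatewise bounds automatically lie in $P_a(\mathbf Q)$ --- is false. Take $\mathbf Q$ to be the four-element distributive lattice $\{0,a,b,1\}$ with $0<a<1$, $0<b<1$ and $a\parallel b$. Then $(0,1)$ and $(b,a)$ both lie in $P_a(\mathbf Q)$, and $b\in U(0,b)$, $0\in L(1,a)$, yet $(b,0)\notin P_a(\mathbf Q)$ because $a\not\leq U(b,0)=\{b,1\}$. So $U(\{(0,1),(b,a)\})$ computed inside $\mathbb P_a(\mathbf Q)$ is a \emph{proper} subset of $U(0,b)\times L(1,a)$, and the quantifications in $\mathbb P_a(\mathbf Q)$ do not literally reduce to independent quantifications in each coordinate. (The paper's own formula $U(A)=U(p_1(A))\times L(p_2(A))$ suffers from the same defect if read literally; it must be understood as intersected with $P_a(\mathbf Q)$.) What actually rescues the decoupling is weaker and needs its own argument: for $A\subseteq P_a(\mathbf Q)$ the sets $L(A)$ and $U(A)$ depend only on the projections $p_1(A)$ and $p_2(A)$, so it suffices to show that the projections of the intermediate sets are the full coordinatewise bound sets $U(x_1,x_2)$ and $L(y_1,y_2)$ --- i.e.\ that each $u\in U(x_1,x_2)$ admits \emph{some} partner $w\in L(y_1,y_2)$ with $(u,w)\in P_a(\mathbf Q)$ (in the example above, $(b,a)$ serves as such a partner for $b$, using that $(u,a)$ and $(a,w)$ always belong to $P_a(\mathbf Q)$). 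Until you supply that verification, or otherwise control the discrepancy between the twisted product and its intersection with $P_a(\mathbf Q)$, the central computation of (iii) is not justified; as written, your plan would stall when the asserted automatic membership fails.
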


\begin{proof}
Let $(b,c),(d,e),(f,g)\in P_a(\mathbf Q)$ and $h,i\in Q$. It is easy to show that
\begin{align*}
L(A) & =L(p_1(A))\times U(p_2(A)), \\
U(A) & =U(p_1(A))\times L(p_2(A))
\end{align*}
for all $A\subseteq P_a(\mathbf Q)$.
\begin{enumerate}[(i)]
\item Clearly, $\mathbb P_a(\mathbf Q)$ is a poset with an antitone involution and fixed point $(a,a)$. According to (i),
\[
L((b,c),(c,b))=L(b,c)\times U(b,c)\leq(a,a)\leq U(d,e)\times L(d,e)=U((d,e),(e,d)).
\]
\item We have $(h,a),(i,a)\in P_a(\mathbf Q)$. Moreover, $(h,a)\leq(i,a)$ if and only if $h\leq i$.
\item Using (i), we obtain
\begin{align*}
          L(U((b,c),(d,e)),(f,g)) & \approx L(U(b,d)\times L(c,e),(f,g))\approx \\
                                  & \approx L(U(b,d),f)\times U(L(c,e),g), \\
LU(L((b,c),(f,g)),L((d,e),(f,g))) & \approx LU(L(b,f)\times U(c,g),L(d,f)\times U(e,g))\approx \\
                                  & \approx L(U(L(b,f),L(d,f))\times L(U(c,g),U(e,g)))\approx \\
                                  & \approx LU(L(b,f),L(d,f))\times UL(U(c,g),U(e,g)).
\end{align*}
\end{enumerate}
\end{proof}

It is well-known (see e.g.\ \cite K) that for an arbitrary distributive lattice $\mathbf Q$ the constructed poset $\mathbb P_a(\mathbf Q)$ is a Kleene algebra. We have shown that this construction can be extended to arbitrary posets.

\begin{example}
If $\mathbf Q$ is the poset $(Q,\leq)$ visualized in Figure~8:
\vspace*{-2mm}
\begin{center}
\setlength{\unitlength}{7mm}
\begin{picture}(4,6)
\put(2,1){\circle*{.3}}
\put(2,3){\circle*{.3}}
\put(1,5){\circle*{.3}}
\put(3,5){\circle*{.3}}
\put(2,3){\line(0,-1)2}
\put(2,3){\line(-1,2)1}
\put(2,3){\line(1,2)1}
\put(1.85,.25){$0$}
\put(2.4,2.85){$a$}
\put(.85,5.4){$b$}
\put(2.85,5.4){$c$}
\put(1.2,-.75){{\rm Fig.\ 8}}
\end{picture}
\end{center}
\vspace*{4mm}
then $\mathbb P_a(\mathbf Q)$ is the pseudo-Kleene poset shown in Figure~9
\vspace*{-2mm}
\begin{center}
\setlength{\unitlength}{7mm}
\begin{picture}(6,10)
\put(1,1){\circle*{.3}}
\put(5,1){\circle*{.3}}
\put(1,3){\circle*{.3}}
\put(3,3){\circle*{.3}}
\put(5,3){\circle*{.3}}
\put(1,5){\circle*{.3}}
\put(3,5){\circle*{.3}}
\put(5,5){\circle*{.3}}
\put(1,7){\circle*{.3}}
\put(3,7){\circle*{.3}}
\put(5,7){\circle*{.3}}
\put(1,9){\circle*{.3}}
\put(5,9){\circle*{.3}}
\put(1,1){\line(0,1)8}
\put(1,1){\line(1,1)2}
\put(5,1){\line(-1,1)2}
\put(5,1){\line(0,1)8}
\put(3,5){\line(-1,-1)2}
\put(3,5){\line(-1,1)2}
\put(3,5){\line(0,1)2}
\put(3,5){\line(1,1)2}
\put(3,5){\line(1,-1)2}
\put(3,5){\line(0,-1)2}
\put(1,9){\line(1,-1)2}
\put(5,9){\line(-1,-1)2}
\put(.35,.25){$(0,c)$}
\put(4.35,.25){$(0,b)$}
\put(2.35,1.9){$(0,a)$}
\put(-.6,2.85){$(a,c)$}
\put(-.6,4.85){$(b,c)$}
\put(5.3,2.85){$(a,b)$}
\put(5.3,4.85){$(c,b)$}
\put(3.3,4.85){$(a,a)$}
\put(-.6,6.85){$(b,a)$}
\put(5.3,6.85){$(c,a)$}
\put(2.35,7.75){$(a,0)$}
\put(.35,9.4){$(b,0)$}
\put(4.35,9.4){$(c,0)$}
\put(2.2,-.75){{\rm Fig.\ 9}}
\end{picture}
\end{center}
\vspace*{4mm}
which is neither a pseudo-Kleene algebra nor a Kleene poset since
\begin{align*}
L(U((a,c),(0,a)),(a,b)) & =L(U((a,a)),(a,b))=L((a,a),(a,b))=\{(0,b),(a,b)\}\neq \\
                        & \neq\{(0,b)\}=L(P_a(\mathbf Q)\setminus\{(0,c),(a,c),(b,c)\})=LU((0,b))= \\
                        & =LU(L((a,c),(a,b)),L((0,a),(a,b))).
\end{align*}
\end{example}

Authors' addresses:

Ivan Chajda \\
Palack\'y University Olomouc \\
Faculty of Science \\
Department of Algebra and Geometry \\
17.\ listopadu 12 \\
771 46 Olomouc \\
Czech Republic \\
ivan.chajda@upol.cz

Helmut L\"anger \\
TU Wien \\
Faculty of Mathematics and Geoinformation \\
Institute of Discrete Mathematics and Geometry \\
Wiedner Hauptstra\ss e 8-10 \\
1040 Vienna \\
Austria, and \\
Palack\'y University Olomouc \\
Faculty of Science \\
Department of Algebra and Geometry \\
17.\ listopadu 12 \\
771 46 Olomouc \\
Czech Republic \\
helmut.laenger@tuwien.ac.at
\end{document}